\pgfplotsset{compat=1.11}
\newtheorem{theorem}{Theorem}
\newtheorem{definition}{Definition}
\newtheorem{lemma}{Lemma}
\newtheorem{remark}{Remark}
\newtheorem{assumption}{Assumption}
\DeclareMathOperator*{\argmin}{arg\,min}
\newcommand{\bbR}{\mathbb{R}}
\newcommand\pcap{\mathrel{\ooalign{\scalebox{1.1}{$\cap$}\cr
  \hidewidth\raise.25ex\hbox{\tiny$<\mkern2.2mu$}\cr}}}
\begin{document}
\begin{frontmatter}
\title{Prioritized Constraints in Optimization-Based Control}

\author[auth1]{Daniel Arnström}\ead{daniel.arnstrom@gmail.com}
\author[auth2]{Gianluca Garofalo}\ead{gianluca.garofalo@outlook.com}

\address[auth1]{Department of Information Technology, Uppsala University, Sweden}
\address[auth2]{Nio Robotics, Toulouse, France}

\begin{keyword}                           
    Optimization, Prioritized constraints, Predictive control
\end{keyword}

\begin{abstract}
    We provide theoretical foundations and computational tools for the systematic design of optimization-based control laws with constraints that have different priorities. By introducing the concept of \emph{prioritized intersections}, we extend and unify previous work on the topic. Moreover, to enable the use of prioritized intersection in real-time applications, we propose an efficient solver for forming such intersections for polyhedral constraints. The solver in question is a tailored implementation of a dual active-set quadratic programming solver that leverages the particular problem structure of the optimization problems arising for prioritized intersections.
The method is validated in a real-time MPC application for autonomous driving, where it successfully resolves six different levels of conflicting constraints, confirming its efficiency and practicality for control. Furthermore, we show that the proposed solver outperforms existing solvers for hierarchical quadratic programming, making it relevant beyond control applications. 
\end{abstract}

\end{frontmatter}

\definecolor{set19c1}{HTML}{E41A1C}
\definecolor{set19c2}{HTML}{377EB8}
\definecolor{set19c3}{HTML}{4DAF4A}
\definecolor{set19c4}{HTML}{984EA3}
\definecolor{set19c5}{HTML}{FF7F00}
\definecolor{set19c6}{HTML}{FFFF33}
\definecolor{set19c7}{HTML}{A65628}
\definecolor{set19c8}{HTML}{F781BF}
\definecolor{set19c9}{HTML}{999999}

\definecolor{prio3}{HTML}{E41A1C}
\definecolor{prio4}{HTML}{4DAF4A}
\definecolor{prio5}{HTML}{377EB8}

\newcommand{\makeplot}[3]{
    \begin{tikzpicture}[scale=1]
        \def\si{#1}
        \def\sii{#2}
        \def\siii{#3}
        \pgfplotstableread{data/scenario\si\sii\siii.dat}{\data}
        \begin{axis}[
            xmin=0,xmax=20,
            ymin=-1.5,ymax=1.5,
            xlabel={Time [s]},
            ylabel={Position [m]},
            legend style={at ={(0.5,1.2)},anchor=north}, 
            ymajorgrids,yminorgrids,xmajorgrids,
            x post scale=0.6,
            y post scale=0.35,
            legend cell align={left},legend columns=3,
            ]
            \draw[fill=prio\si, opacity=0.5] (axis cs:5,-1.5) rectangle (axis cs:15,0.5);
            \draw[fill=prio\sii, opacity=0.5] (axis cs:7,-0.1) rectangle (axis cs:13,1.5);
            \draw[fill=prio\siii, opacity=0.5] (axis cs:9,-1.5) rectangle (axis cs:11,0.4);
            \addplot [black,very thick] table [x={t}, y={s}] {\data}; 
        \end{axis}
    \end{tikzpicture}
}

\section{Introduction}
\label{sec:intro}
Many real-world control problems involve multiple constraints that should be satisfied simultaneously, though some may take precedence over others. Consider, for example, a self-driving car scenario with the following safety constraints: (i) avoid collisions, (ii) stay on the road, and (iii) adhere to speed limits. Ideally, all of these constraints should be satisfied, but in case some of them are conflicting, (i) should be prioritized over (ii), and (ii) should be prioritized over (iii). 
More generally, control applications often have constraints with the following hierarchical structure: safety constraints, efficiency constraints, and durability constraints. This hierarchy can be seen as a generalization of Asimov’s famous \textit{Three Laws of Robotics} \cite{murphy2020beyond}.

A class of controllers that easily account for constraints is \textit{optimization-based} controllers, which produce control actions $u^*$ by solving optimization problems of the form 
    \begin{equation}
        \label{eq:opt-ctrl}
        u^* = \argmin_{u \in \mathcal{U}(x)} J(u,x),
    \end{equation}
where $x$ denotes the current state, $\mathcal{U}(x)$ denotes the set of admissible control actions at $x$, and $J$ denotes the control objective. Example of such controllers include model predictive controllers (MPCs) \cite{rawlings2017model}, safety filters \cite{wabersich2023data}, reference governors \cite{garone2017reference}, feasibility governors \cite{skibik2021feasibility}, and control allocators \cite{johansen2013control}. 

    Often in optimization-based controllers, all constraints that comprise the set $\mathcal{U}(x)$ are prioritized equally, even though some constraints are more important than others. Sometimes, a rough prioritization is imposed by classifying constraints as either \textit{hard} or \textit{soft}, where hard constraints are forced to hold, while soft constraints may be violated if necessary \cite{scokaert1999feasibility,zeilinger2014soft}. As highlighted by our initial example, there are, however, scenarios when finer-grained prioritization of constraints is necessary.
In this paper, we consider hierarchies of constraints with different importance. The usual soft/hard-dichotomy is a special case in this framework, which corresponds to a hierarchy with two levels.
More formally, we consider cases where $\mathcal{U}(x)$ is defined as the intersection of an ordered collections of feasibility sets, $\mathcal{U}(x) = \bigcap_{i=1}^p \mathcal{U}_i(x)$, 
where a lower index means higher priority. 
When the feasibility sets are compatible ($\mathcal{U}(x) \neq \emptyset$), the optimization-based controller can, without any intervention, directly use $\mathcal{U}(x)$ as the feasible set. If, on the other hand, the feasibility sets are incompatible ($\mathcal{U}(x) = \emptyset$,) we aim to ensure that as many high-prioritized constraints as possible are fulfilled.

Using optimization-based controllers of the form \eqref{eq:opt-ctrl} can be computationally demanding, since an optimization problem needs to be solved in real time. Including prioritized constraints in \eqref{eq:opt-ctrl} makes it even more demanding to solve, since conventional solvers cannot be applied. To address this, we propose an efficient solver for handling prioritized constraints in an important special case: when $J$ is quadratic and $\{\mathcal{U}_i\}_{i=1}^p$ are polyhedra.


In summary, the contributions of this paper are:
\begin{enumerate}
    \item We introduce the notion of a prioritized intersection (Definition~\ref{def:pcap}), show how it can be computed, and derive several of its properties. This unifies and extends previous works. 
    \item We propose an efficient way of computing prioritized intersections for polyhedra (Algorithm~\ref{alg:main}), which is suitable for real-time applications.
    \item We use prioritized constraints in a real-time model predictive control application, and show that prioritized intersections can resolve conflicting constraints correctly and efficiently in real-time (Section~\ref{sec:mpc-exp}).
\end{enumerate}

\paragraph*{Related work}
Constraints of different priority naturally arise in robotics, where hierarchical quadratic programming \cite{escande2014hierarchical} is a principled way of handling prioritized \emph{linear} constraints. By using Newton steps, the same framework can be extended to handle nonlinear constraints \cite{pfeiffer2023hierarchical}. While state-of-the-art solvers such as \texttt{lexls} \cite{dimitrov2015efficient} and \texttt{NIPM} \cite{pfeiffer2023n} efficiently handle prioritized \emph{equality} constraints, they become inefficient when \emph{inequality} constraints also need to be prioritized. This restricts their use in real-time control applications, where inequality constraints are more common than equality constraints. In this paper, we propose a solver that handles prioritized equality \emph{and} inequality constraints efficiently.  

For autnonmous driving, prioritized constraints have been considered through so-called \emph{rulebooks} \cite{censi2019liability}. Safety filters that are based on control barrier functions have also recently been extended to safety constraints of different priorities \cite{basso2020task,lee2023hierarchical}.
In the context of MPC, the need for handling prioritized constraints is not new. In \cite{tyler1999propositional}, the authors use propositional logic to capture prioritized constraints. The strategy results in a mixed integer problem being solved. 
In \cite{kerrigan2002designing}, the authors use lexicographical optimization to handle prioritized objectives and constraints.
In \cite{vada2001linear}, prioritized constraints are handled by a two-step approach, where a linear program is first solved to get the ``optimal'' constraint slacks, followed by a solution of the nominal problem with the constraints relaxed using these slacks. The first step can be interpreted as forming the \textit{prioritized intersection} that we propose in this paper. 
There has also been work on the related problem of prioritized \emph{objectives} (rather than constraints) for MPC \cite{he2021lexicographic}.

All of these works do indiretctly make use of the prioritized intersection that we formalize in this paper.

\section{Prioritized intersections}
When multiple constraints are imposed simultaneously, feasible points are found in the intersection of the constraint sets. If the constraints have different importance, however, the standard commutative set intersection $\cap$ is insufficient, since it fails to distinguish between differently prioritized constraints \cite{fulton2013intersection}. To address this limitation, we introduce a noncommutative operator we call the \textit{prioritized intersection}, denoted $\pcap$, where the set $\mathcal{Z}_1 \pcap \mathcal{Z}_2$ represents the intersection of $\mathcal{Z}_1$ with $\mathcal{Z}_2$, with elements in $\mathcal{Z}_1$ being prioritized.

To formalize $\pcap$, we first associate each constraint set $\mathcal{Z}$ with a \textit{violation function} $\mathcal{V}_{\mathcal{Z}}$ that quantify how far away a point is from the set.
\begin{definition}[Violation function]
    \label{def:violation}
    A mapping $\mathcal{V}_{\mathcal{Z}}: \bbR^{n_z} \to \bbR$ is a \textit{violation function} for the set $\mathcal{Z}$ if 
    $\mathcal{V}_{\mathcal{Z}}(z) > 0$ for all $z \notin \mathcal{Z}$, and $\mathcal{V}_{\mathcal{Z}}(z) = 0$ for all $z \in \mathcal{Z}$. Moreover, for two points $\tilde{z}$,$\hat{z} \in \bbR^{n_z}$, we say that $\tilde{z}$ violates $\mathcal{Z}$ more than $\hat{z}$ if  $\mathcal{V}_{\mathcal{Z}}(\tilde{z}) >  \mathcal{V}_{\mathcal{Z}}(\hat{z})$.
\end{definition}

The prioritized intersection of two sets $\mathcal{Z}_1$ and $\mathcal{Z}_2$ is defined as the points in $\mathcal{Z}_1$ which violates $\mathcal{Z}_2$ the least, formalized as follows.
\begin{definition}[Prioritized intersection]
    \label{def:pcap}
Let $\mathcal{Z}_1,\mathcal{Z}_2 \subseteq \bbR^{n_z}$ and let $\mathcal{V}_{\mathcal{Z}_2}$ be a violation function for $\mathcal{Z}_2$. Then the \textit{prioritized intersection} of $\mathcal{Z}_1$ and $\mathcal{Z}_2$ is defined as
    \begin{equation}
        \label{eq:pcap}
        \mathcal{Z}_1 \pcap_{\mathcal{V}_{\mathcal{Z}_2}} \mathcal{Z}_2 \triangleq \argmin_{z \in \mathcal{Z}_1} \mathcal{V}_{\mathcal{Z}_2}(z). 
    \end{equation}
\end{definition}

Note that the prioritized intersection depends on which violation function is used. To simplify notation, we will drop the subscript and write $\pcap$ when it is clear/unimportant which violation function is used. 

To ensure that $\pcap$ defined above is suitable to handle prioritized constraints, we establish some of its properties in the following theorem.

\begin{theorem}[Properties of $\protect \pcap$]
    Let $\mathcal{Z}_1, \mathcal{Z}_2 \in \bbR^{n_z}$, and the operator $\pcap$ be as defined in Definition~\ref{def:pcap}. Then the following properties hold
    \begin{enumerate}
        \item $\mathcal{Z}_1 \pcap \mathcal{Z}_2 \subseteq \mathcal{Z}_1$. \label{prop:subset}
        \item $\mathcal{Z}_1 \pcap \mathcal{Z}_2 = \mathcal{Z}_1 \cap \mathcal{Z}_2 $ if  $\mathcal{Z}_1 \cap \mathcal{Z}_2 \neq \emptyset$ \label{prop:gencap}
        \item $\mathcal{Z}_1 \pcap \mathcal{Z}_2 \neq \emptyset$ iff $\mathcal{Z}_1 \neq \emptyset$ \label{prop:nonempty}.
    \end{enumerate}
\end{theorem}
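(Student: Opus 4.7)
The plan is to dispatch the three properties in sequence, relying throughout on two immediate consequences of Definition~\ref{def:violation}: (a) $\mathcal{V}_{\mathcal{Z}_2}$ is globally nonnegative, and (b) $\mathcal{V}_{\mathcal{Z}_2}(z) = 0$ exactly when $z \in \mathcal{Z}_2$.

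Property~\ref{prop:subset} is immediate from Definition~\ref{def:pcap}: the $\argmin$ is taken over $\mathcal{Z}_1$, so every element of $\mathcal{Z}_1 \pcap \mathcal{Z}_2$ lies in $\mathcal{Z}_1$ by construction. For Property~\ref{prop:gencap}, I would first note that any point $z \in \mathcal{Z}_1 \cap \mathcal{Z}_2$ attains $\mathcal{V}_{\mathcal{Z}_2}(z) = 0$, which by (a) is the global lower bound, so $\min_{z \in \mathcal{Z}_1} \mathcal{V}_{\mathcal{Z}_2}(z) = 0$. The two inclusions then follow at once: any $z \in \mathcal{Z}_1 \cap \mathcal{Z}_2$ realises this minimum and therefore belongs to $\mathcal{Z}_1 \pcap \mathcal{Z}_2$; conversely, any minimiser $z^\star \in \mathcal{Z}_1 \pcap \mathcal{Z}_2$ must satisfy $\mathcal{V}_{\mathcal{Z}_2}(z^\star) = 0$ and hence, by (b), lies in $\mathcal{Z}_2$, which combined with Property~\ref{prop:subset} places it in $\mathcal{Z}_1 \cap \mathcal{Z}_2$.

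For Property~\ref{prop:nonempty}, the forward implication $\mathcal{Z}_1 \pcap \mathcal{Z}_2 \neq \emptyset \Rightarrow \mathcal{Z}_1 \neq \emptyset$ is a direct corollary of Property~\ref{prop:subset}. The reverse implication is where the main obstacle sits, because one must ensure that the infimum of $\mathcal{V}_{\mathcal{Z}_2}$ over a nonempty $\mathcal{Z}_1$ is attained rather than merely approached. I would proceed by a two-case argument: if $\mathcal{Z}_1 \cap \mathcal{Z}_2 \neq \emptyset$, Property~\ref{prop:gencap} immediately yields $\mathcal{Z}_1 \pcap \mathcal{Z}_2 = \mathcal{Z}_1 \cap \mathcal{Z}_2 \neq \emptyset$; if instead $\mathcal{Z}_1 \cap \mathcal{Z}_2 = \emptyset$, then existence of a minimiser must be secured by an implicit regularity assumption on $\mathcal{V}_{\mathcal{Z}_2}$ and $\mathcal{Z}_1$ (for instance, lower semicontinuity of the violation function together with closedness and either compactness or coercivity of $\mathcal{Z}_1$), under which a Weierstrass-type existence argument applies. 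I expect this attainment caveat to be the only delicate point in the proof.
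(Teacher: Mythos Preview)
Your approach matches the paper's almost exactly: Property~\ref{prop:subset} by definition of $\argmin$, Property~\ref{prop:gencap} via the fact that points in $\mathcal{Z}_1\cap\mathcal{Z}_2$ achieve the global lower bound $0$ of the violation function, and Property~\ref{prop:nonempty} by combining emptiness of the feasible set in one direction with lower-boundedness in the other. You are in fact more careful than the paper on two counts. For Property~\ref{prop:gencap} you spell out both inclusions, whereas the paper only explicitly argues $\mathcal{Z}_1\cap\mathcal{Z}_2\subseteq\mathcal{Z}_1\pcap\mathcal{Z}_2$ and then asserts equality. For Property~\ref{prop:nonempty} you correctly flag that boundedness from below alone does not guarantee the infimum is \emph{attained}; the paper's proof simply states that $\mathcal{V}_{\mathcal{Z}_2}\geq 0$ implies the minimiser set is nonempty, which is not true without further hypotheses of the kind you mention (lower semicontinuity, closedness, coercivity or compactness). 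Your caveat there is warranted, not excessive.
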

\begin{proof}
    Property \ref{prop:subset} follows directly from $\pcap$ being defined by an optimization problem over the elements in $\mathcal{Z}_1$.
    For Property \ref{prop:gencap}, assume that $\mathcal{Z}_1 \cap \mathcal{Z}_2 \neq \emptyset$ and let $\tilde{z} \in \mathcal{Z}_1 \cap \mathcal{Z}_2$. Then, since $\tilde{z} \in \mathcal{Z}_2$, we have that $\mathcal{V}_{\mathcal{Z}_2}(\tilde{z})=0$. Moreover, the positive definitiveness of $\mathcal{V}_{\mathcal{Z}_2}$, and that $\tilde{z} \in \mathcal{Z}_1$, implies that $\tilde{z}$ is an optimizer to \eqref{eq:pcap}. Since $\tilde{z}$ is an arbitrary element in $\mathcal{Z}_1 \cap \mathcal{Z}_2$, we have that  
    \begin{equation*}
        \mathcal{Z}_1 \pcap \mathcal{Z}_2 \triangleq \argmin_{z \in \mathcal{Z}_1} \mathcal{V}_{\mathcal{Z}_2}(z) = \mathcal{Z}_1 \cap \mathcal{Z}_2, \quad\text{if } \mathcal{Z}_1 \cap \mathcal{Z}_2 \neq \emptyset.
    \end{equation*}
    Finally, for Property \ref{prop:nonempty}, $\mathcal{Z}_1 = \emptyset$ directly implies that there are no solutions to \eqref{eq:pcap}, and hence we get $\mathcal{Z}_1 \pcap \mathcal{Z}_2 = \emptyset$.
    If $\mathcal{Z}_1$ is nonempty, it follows from $\mathcal{V}_{\mathcal{Z}_2}$ being bounded from below by $0$ that the set of minimizers are nonempty, i.e., that $\mathcal{Z}_1 \pcap \mathcal{Z}_2 \neq \emptyset$.
\end{proof}

First, Property \ref{prop:subset} ensures that the prioritized intersection is a subset of the set with the highest priority, which yields the desired prioritization. Secondly, Property \ref{prop:gencap} ensures that the prioritized intersection recovers the normal intersection when both sets are compatible. Finally, Property \ref{prop:nonempty} ensures that the prioritized intersection always returns a nonempty set, even when the sets are conflicting, which highlights that $\pcap$ are able to resolve conflicting constraints.

\subsection{Explicit representation of prioritized intersections}
Next, we give a more explicit representation of $\pcap$ by considering the case when the constraint sets are sublevel sets of the form $\mathcal{Z}= \{z : g(z) \leq 0\}$ for some function $g: \bbR^{n_z} \to \bbR^m$.
In such cases, a natural measure of how much a point violates $\mathcal{Z}$ is
\begin{equation}
    \label{eq:distsub}
    \mathcal{V}_{\mathcal{Z}}(z)=\|\max(0,\Lambda g(z))\|_2^2, 
\end{equation}
where the max function is applied element-wise, and the weighting matrix $\Lambda$ is diagonal and positive definite.
When \eqref{eq:distsub} is used for prioritized intersections, the weighting matrix $\Lambda$ allows for a soft prioritization among the constraints that comprise $\mathcal{Z}$, where a large value on its diagonal corresponds to a higher prioritization of the corresponding constraint. 

The following lemma establishes that the function in \eqref{eq:distsub} is, in fact, a violation function. 
\begin{lemma}[Natural violation function]
    Let the set $\mathcal{Z} \subseteq \bbR^{n_z}$ be given by the sublevel set $\mathcal{Z} = \{z:g(z) \leq 0\}$. Then the function in \eqref{eq:distsub} is a violation function for $\mathcal{Z}$.
\end{lemma}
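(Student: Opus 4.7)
The plan is to verify the two defining properties of a violation function (Definition~\ref{def:violation}) directly from the structure of \eqref{eq:distsub}, exploiting that $\Lambda$ is diagonal and positive definite (so $\Lambda_{ii} > 0$ for every $i$).

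First I would handle the easy direction: if $z \in \mathcal{Z}$, then by definition $g(z) \leq 0$ componentwise. Multiplying by the positive diagonal entries of $\Lambda$ preserves the sign of each component, so $\Lambda g(z) \leq 0$ componentwise. Applying the elementwise $\max(0,\cdot)$ then yields the zero vector, whose squared Euclidean norm is $0$. Hence $\mathcal{V}_{\mathcal{Z}}(z) = 0$ for all $z \in \mathcal{Z}$.

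For the other direction, suppose $z \notin \mathcal{Z}$. Then there must exist at least one index $i$ such that $g_i(z) > 0$ (otherwise $g(z) \leq 0$ and $z$ would lie in $\mathcal{Z}$). Since $\Lambda_{ii} > 0$, the $i$-th component of $\Lambda g(z)$ equals $\Lambda_{ii} g_i(z) > 0$, so $\max(0, \Lambda g(z))_i > 0$. Because every entry of $\max(0, \Lambda g(z))$ is nonnegative, the squared $2$-norm is a sum of nonnegative terms containing at least one strictly positive term, giving $\mathcal{V}_{\mathcal{Z}}(z) > 0$.

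There is essentially no main obstacle here; the argument is a two-line sign check in each direction. The only subtlety worth stating explicitly is why the positive-definiteness \emph{and} diagonal structure of $\Lambda$ are both used: diagonality guarantees that each constraint $g_i(z) \leq 0$ is treated independently (so a violation in the $i$-th component cannot be cancelled by another component), while positive definiteness guarantees the scalar multiplier $\Lambda_{ii}$ preserves the sign of $g_i(z)$. With these two remarks, the lemma follows immediately.
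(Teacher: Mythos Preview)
Your proof is correct and follows essentially the same approach as the paper's own proof: a direct verification of the two conditions in Definition~\ref{def:violation} via elementary sign arguments. If anything, your version is slightly more careful, since you explicitly argue componentwise (noting that $z\notin\mathcal{Z}$ only guarantees $g_i(z)>0$ for \emph{some} index $i$), whereas the paper compresses this into the shorthand ``$g(z)>0$''.
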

\begin{proof}
    If $z \notin \mathcal{Z}$, then we have $g(z) > 0$. resulting in $\max(0, \Lambda g(z)) > 0$, and, in turn, that $\mathcal{V}_{\mathcal{Z}}(z) >0$. If instead $z \in \mathcal{Z}$ we have $g(z) \leq 0$, which gives $\max(0,\Lambda g(z)) = 0$, and, in turn, $\mathcal{V}_{\mathcal{Z}}(z) = 0$. Hence, according to Definition~\ref{def:violation}, $\|\max(0,\Lambda g(z))\|_2^2 $ is a violation function.
\end{proof}
For the rest of the paper, we assume that the sets we want to intersect are sublevel sets so that we can leverage the natural violation function in \eqref{eq:distsub}.
\begin{assumption}[Sublevel sets]
   The sets to intersect are sublevel sets. 
\end{assumption}
\begin{assumption}[Natural violation function]
    The violation to a set $\mathcal{Z}= \{z: g(z) \leq 0\}$ is quantified with the natural violation function given in \eqref{eq:distsub}.
\end{assumption}
\begin{remark}[Alternative violation function] For sets that are not given by a sublevel set, a candidate for the violation functions is the projection distance to the set (since every distance to a set is also a violation function.)
\end{remark}

When using the natural violation function in \eqref{eq:distsub}, the prioritized intersection $\pcap$ is explicitly given as the intersection of the higher-prioritized set and a perturbation of the sublevel set that defines the lower prioritized set, as is shown in the following lemma.

\begin{lemma}[Explicit representation of $\pcap$] 
    \label{lem:exp}
    Let $\mathcal{Z}_1 \subseteq \bbR^{n_z}$ and let $\mathcal{Z}_2 = \{z\in \bbR^{n_z} : g(x) \leq 0\}$. Moreover, let the violations to $\mathcal{Z}_2$ be quantified by the natural violation function in \eqref{eq:distsub}. Then there exists $\epsilon^* \in \bbR^m_{\geq 0}$ such that
    \begin{equation*}
        \mathcal{Z}_1 \pcap \mathcal{Z}_2 = \mathcal{Z}_1 \cap \{z : g(z) \leq \epsilon^* \}.
    \end{equation*}
\end{lemma}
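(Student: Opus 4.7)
The approach is to lift the prioritized intersection problem by introducing a slack vector $s \in \bbR^{m}_{\geq 0}$ that upper-bounds $g(z)$, and then to take $\epsilon^{*}$ to be the $s$-component of an optimizer of the lifted problem. Concretely, I would consider
\begin{equation*}
    v^{*} \triangleq \min_{z \in \mathcal{Z}_1,\ s \geq 0,\ s \geq g(z)} \|\Lambda s\|_2^2,
\end{equation*}
and note that, for each fixed $z$, the infimum in $s$ is attained at $s = \max(0,g(z))$. Hence $v^{*}$ equals $\min_{z \in \mathcal{Z}_1} \mathcal{V}_{\mathcal{Z}_2}(z)$, and the $z$-components of the optimizers of the lifted problem are exactly $\mathcal{Z}_1 \pcap \mathcal{Z}_2$. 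Pick any optimizer $(z^{*},\epsilon^{*})$; then by construction $\epsilon^{*} = \max(0,g(z^{*})) \in \bbR_{\geq 0}^{m}$.

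For the inclusion $\mathcal{Z}_1 \cap \{z : g(z) \leq \epsilon^{*}\} \subseteq \mathcal{Z}_1 \pcap \mathcal{Z}_2$, I would argue by direct comparison. Any $z$ in the left-hand side satisfies $\max(0,\Lambda g(z)) \leq \Lambda \epsilon^{*}$ componentwise, because $\Lambda$ is diagonal and positive and $\epsilon^{*} \geq 0$. Therefore $\|\max(0,\Lambda g(z))\|_2^2 \leq \|\Lambda \epsilon^{*}\|_2^2 = v^{*}$, so $z$ attains the minimum and belongs to $\mathcal{Z}_1 \pcap \mathcal{Z}_2$.

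For the reverse inclusion, it suffices to show $\max(0,g(z)) = \epsilon^{*}$ for every $z \in \mathcal{Z}_1 \pcap \mathcal{Z}_2$, since then $g(z) \leq \max(0,g(z)) = \epsilon^{*}$. This is an assertion of uniqueness of the optimal slack in the lifted problem, which I would obtain from strict convexity of $s \mapsto \|\Lambda s\|_2^2$ combined with convexity of the lifted feasible set: if two optimizers produced distinct slacks $s_a \neq s_b$, their midpoint $(z_c, s_c) = \tfrac{1}{2}(z_a+z_b, s_a+s_b)$ would still be feasible, and a standard Jensen argument would give $\|\Lambda s_c\|_2^2 < v^{*}$, a contradiction.

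The main obstacle is precisely this uniqueness step, which tacitly requires $\mathcal{Z}_1$ and $g$ to be convex so that the lifted feasible set is convex. These hypotheses are consistent with, and implicit in, the polyhedral setting emphasized later in the paper, where $\mathcal{Z}_1$ is a polyhedron and $g$ is affine. Without such convexity, a single $\epsilon^{*}$ need not capture all minimizers simultaneously, and the lemma would need to be restated in terms of the set of optimal slacks rather than a single vector.
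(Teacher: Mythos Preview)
Your approach is essentially the paper's: both lift the minimization to a joint problem in $(z,\epsilon)$ via the identity $\max(0,c)=\min\{\epsilon : \epsilon\geq c\}$, take $\epsilon^{*}$ to be an optimal slack, and then read off the characterization of the $z$-minimizers from the constraints. The only cosmetic difference is that you impose $s\geq 0$ explicitly, whereas the paper omits it (harmlessly, since the optimal slack is automatically nonnegative).

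Where you go beyond the paper is in the reverse inclusion. The paper simply writes ``the minimizing $\epsilon$ is denoted $\epsilon^{*}$'' and concludes, tacitly assuming the optimal slack is unique. You make this step explicit via strict convexity of $s\mapsto\|\Lambda s\|_2^2$ together with convexity of the lifted feasible set, and you correctly flag that this requires $\mathcal{Z}_1$ and $g$ to be convex. That caveat is not idle: without convexity the lemma can fail (e.g.\ $\mathcal{Z}_1=\{(1,0),(0,1),(0.8,0.8)\}$, $g(z)=z$, $\Lambda=I$ admits no single $\epsilon^{*}$), so your observation genuinely tightens an argument the paper leaves implicit and only later justifies through its polyhedral standing assumptions.
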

\begin{proof} First note that $\max(0,c)$, for any $c\in \mathbb{R}$, is equivalent to  $\max(0,c) = \min_{\tilde{\epsilon}\geq 0} \tilde{\epsilon}$ subject to $c \leq \tilde{\epsilon}$. Thus 
        $\|\max(0,\Lambda g(z))\|^2_2 = \min_{\tilde{\epsilon}} \|\tilde{\epsilon}\|^2_2 \text{ subject to } \Lambda g(z) \leq \tilde{\epsilon}.$
    The variable change $\epsilon = \Lambda^{-1} \tilde{\epsilon}$ gives
    \begin{equation*}
        \|\max(0,\Lambda g(z))\|^2_2 = \min_{\epsilon} \|\Lambda \epsilon \|^2_2 \text{ subject to }  g(z) \leq \epsilon.
    \end{equation*}
    Inserting this into \eqref{eq:pcap} and combining the minimization over $\epsilon$ and $z$ yields that $\mathcal{Z}_1 \pcap \mathcal{Z}_2$ is the optimizers to
    \begin{equation}
        \label{eq:epipcap}
            \min_{z\in \mathcal{Z}_1, \epsilon} \|\Lambda \epsilon\|^2_2 \text{ subject to }  g(z) \leq \epsilon,
    \end{equation}
    where the minimizing $\epsilon$ is denoted $\epsilon^*$. Since $z$ only enters in the constraints of \eqref{eq:epipcap}, we get that all $z\in \mathcal{Z}_1$ satisfying $g(z) \leq \epsilon^*$ are minimizers.
\end{proof}

\begin{remark}[Geometry perservation]
    Lemma~\ref{lem:exp} shows that $\pcap$ preserves the geometry of $\mathcal{Z}_2$ by perturbing the right-hand-side of the sublevel set $g(x) \leq 0$ before intersecting it with $\mathcal{Z}_1$.
\end{remark}

\begin{remark}[Regularization]
    \label{rem:reg}
    The objective of the optimization problem in \eqref{eq:epipcap} is convex, but not strictly convex (since $z$ does not enter in the objective.) As a result, the problem does not necessarily have a unique solution, which can lead to irregularities when used for control. A remedy is to add a strictly convex term $q(z)$ to the objective, which yields a unique $\epsilon^*$, since the objective becomes strictly convex jointly in both $z$ and $\epsilon$. If, for example, $q(z) = \rho \|z\|^2_2$, we get for a sufficiently small $\rho>0$ that $\epsilon^*$ is an optimizer to \eqref{eq:epipcap} \cite{kerrigan2000soft}.
\end{remark}

\begin{remark}[Double-sided constraints]
    If $\mathcal{Z}$ is expressed by double-sided inequalities $\mathcal{Z} = \{z: \underline{b} \leq g(z) \leq \overline{b} \}$, the violation function $\mathcal{V}_{\mathcal{Z}}$ can be extended as  
    \begin{equation}
        \mathcal{V}_{\mathcal{Z}}(z)=\|\max(0,\Lambda (g(z)-\overline{b}), \Lambda(\underline{b}-g(z)))\|_2^2, 
    \end{equation}
    and the perturbation in \eqref{eq:epipcap} becomes 
    \begin{equation}
        \label{eq:epipcap-double}
        \min_{z\in \mathcal{Z}_1, \epsilon} \|\Lambda \epsilon\|^2_2 \text{ subject to }  \underline{b} - \epsilon \leq g(z) \leq \overline{b} + \epsilon.
    \end{equation}
\end{remark}
\subsection{Prioritized intersection of multiple sets}
Often, we want to intersect more than just two sets. We therefore introduce notation for iteratively applying $\pcap$ to an ordered collection of sets $\{\mathcal{Z}_i\}_{i=1}^p$. The prioritized intersection for such an ordered collection is denoted 
\begin{equation}
    \overset{p}{\underset{i=1}{\pcap}} \mathcal{Z}_i  \triangleq   \left(\overset{p-1}{\underset{i=1}{\pcap}} \mathcal{Z}_i\right) \pcap \mathcal{Z}_p, \text{   with } \overset{1}{\underset{i=1}{\pcap}} \mathcal{Z}_i = \mathcal{Z}_1.
\end{equation}
Examples of prioritized intersections are shown in Figure~\ref{fig:pcap}, where three different sets are intersected using $\pcap$. Note that the ordering is essential, and different orderings typically lead to different intersections. 
\begin{figure}
  \centering
  \begin{tikzpicture}[scale=0.5]
    \coordinate(p31) at (1,-1);
    \coordinate(p32) at (-2.75,-1);
    \coordinate(p33) at (-2,-2);
    \coordinate(p34) at (0.25,-1.75);
    \draw[fill=green, fill opacity = 0.3] (p31) -- (p32) -- (p33) -- (p34) -- (p31);
    \node (u3) at (-1,-1.4) {$\mathcal{Z}_3$};

    \coordinate(p21) at (-2.5,0.75);
    \coordinate(p22) at (1,2);
    \coordinate(p23) at (3,1.8);
    \coordinate(p24) at (3.5,-0.5);
    \coordinate(p25) at (0.5,-1.5);
    \draw[fill=blue , fill opacity = 0.3] (p21) -- (p22) -- (p23) -- (p24) -- (p25) -- (p21);
    \node (u2) at (1.25,0.25) {$\mathcal{Z}_2$};

    \coordinate(p11) at (0.2,0.5);
    \coordinate(p12) at (-1,2);
    \coordinate(p13) at (-5,2);
    \coordinate(p14) at (-6,0);
    \coordinate(p15) at (-3,-1.5);
    \draw[fill=red, fill opacity =0.3] (p11) -- (p12) -- (p13) -- (p14) -- (p15) -- (p11);
    \node (u1) at (-3.25,0.5) {$\mathcal{Z}_1$};

    \coordinate (optv1) at (intersection of p11--p15 and p21--p25){};
    \coordinate (optv2) at (intersection of p21--p25 and p31--p32){};
    \coordinate (optv3) at (intersection of p11--p15 and p31--p32){};

    \draw[fill,lime] (optv1) circle [radius=.05] node[above,yshift=0.5ex]{$z_{123}$};
    \draw[fill,magenta] (optv2) circle [radius=.05] node[above,xshift=1ex]{$z_{231}$};
    \draw[fill,teal] (optv3) circle [radius=.05] node[above,xshift=-1ex]{$z_{312}$};
\end{tikzpicture}
  \caption{Example of prioritized intersection, where  $\left(\mathcal{Z}_i \protect \pcap \mathcal{Z}_j\right) \protect \pcap \mathcal{Z}_k = \{z_{ijk}\}$.}
  \label{fig:pcap}
\end{figure}

To give a more explicit representation of $\overset{p}{\underset{i=1}{\pcap}} \mathcal{Z}_i$ , iteratively applying Lemma~\ref{lem:exp} gives that the prioritized intersection of several sets is just the intersection of perturbed versions of the nominal sublevel sets, as is shown in the following lemma.

\begin{lemma} 
    \label{lem:multi-pcap}
    Consider the ordered collection $\{\mathcal{Z}_j\}_{j=1}^p$ with $\mathcal{Z}_j=\{z : g_j(z) \leq 0\}$. Moreover, let $z^*_i$ and $\epsilon^*_i$ be defined by the hierarchy of optimization problems 
    \begin{equation}
        \label{eq:hi-opt}
        \begin{aligned}
            (z^*_i, \epsilon^*_i) \in \argmin_{z\in \mathcal{Z}_1,\epsilon_i} &\|\Lambda_i \epsilon_i\|_2^2  \\
            \text{subject to } &g_j(z) \leq \epsilon^*_j, \quad j = 2\dots,i-1.\\
                               & g_i(z) \leq \epsilon_i 
        \end{aligned}
    \end{equation}
Then $\overset{p}{\underset{i=1}{\pcap}}\mathcal{Z}_i = \mathcal{Z}_1 \cap \{z : g_i(z) \leq \epsilon^*_i,\quad i=2,\dots,p\}.$
\end{lemma}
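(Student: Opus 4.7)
My plan is to prove this by induction on $p$, leveraging the recursive definition of $\overset{p}{\underset{i=1}{\pcap}} \mathcal{Z}_i$ together with the two-set result in Lemma~\ref{lem:exp}.

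The base case $p=2$ is exactly Lemma~\ref{lem:exp}: the hierarchy in \eqref{eq:hi-opt} reduces to the single problem $\min_{z\in\mathcal{Z}_1,\epsilon_2}\|\Lambda_2\epsilon_2\|_2^2$ subject to $g_2(z)\le\epsilon_2$, whose perturbation $\epsilon_2^*$ coincides with the one produced by Lemma~\ref{lem:exp}, so $\mathcal{Z}_1\pcap\mathcal{Z}_2=\mathcal{Z}_1\cap\{z:g_2(z)\le\epsilon_2^*\}$.

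For the inductive step, assume the claim holds for $p-1$, so that $\overset{p-1}{\underset{i=1}{\pcap}}\mathcal{Z}_i=\mathcal{Z}_1\cap\{z:g_i(z)\le\epsilon_i^*,\ i=2,\dots,p-1\}$ with the $\epsilon_i^*$ defined by \eqref{eq:hi-opt}. A key observation I would highlight first is that each $\epsilon_i^*$ is determined solely by levels $1,\dots,i$ in \eqref{eq:hi-opt}, so extending the hierarchy from $p-1$ to $p$ does not alter the previously computed perturbations. By the recursive definition, $\overset{p}{\underset{i=1}{\pcap}}\mathcal{Z}_i=\bigl(\overset{p-1}{\underset{i=1}{\pcap}}\mathcal{Z}_i\bigr)\pcap\mathcal{Z}_p$. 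I would apply Lemma~\ref{lem:exp} with the role of the higher-priority set played by the (sublevel-intersection) set $\overset{p-1}{\underset{i=1}{\pcap}}\mathcal{Z}_i$ given by the induction hypothesis, and with $\mathcal{Z}_p=\{z:g_p(z)\le0\}$. Lemma~\ref{lem:exp} then expresses the result as the intersection of $\overset{p-1}{\underset{i=1}{\pcap}}\mathcal{Z}_i$ with $\{z:g_p(z)\le\epsilon_p^*\}$, where $\epsilon_p^*$ minimizes $\|\Lambda_p\epsilon_p\|_2^2$ over $\epsilon_p$ and $z\in\overset{p-1}{\underset{i=1}{\pcap}}\mathcal{Z}_i$ subject to $g_p(z)\le\epsilon_p$. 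Substituting the explicit form of $\overset{p-1}{\underset{i=1}{\pcap}}\mathcal{Z}_i$ shows that this minimization is exactly \eqref{eq:hi-opt} at level $i=p$, so the $\epsilon_p^*$ produced by the two viewpoints agree. Finally, flattening the nested intersection yields $\mathcal{Z}_1\cap\{z:g_i(z)\le\epsilon_i^*,\ i=2,\dots,p\}$, as claimed.

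The main obstacle is the bookkeeping around the perturbations: I must confirm that the $\epsilon_i^*$ appearing in the $p$-fold hierarchy \eqref{eq:hi-opt} are exactly the same quantities that would be produced by iteratively invoking Lemma~\ref{lem:exp}. This hinges on the structural point that in \eqref{eq:hi-opt} each $\epsilon_i^*$ depends only on the $\epsilon_j^*$ for $j<i$, which I would verify explicitly before closing the induction.
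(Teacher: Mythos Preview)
Your induction argument is correct and is precisely what the paper has in mind: its proof of Lemma~\ref{lem:multi-pcap} is the single sentence ``Follows by iteratively applying Lemma~\ref{lem:exp}.'' Your write-up simply makes the bookkeeping explicit (the base case, the identification of $\epsilon_p^*$ from Lemma~\ref{lem:exp} with the level-$p$ solution of \eqref{eq:hi-opt}, and the observation that earlier $\epsilon_i^*$ are unaffected when $p$ increases), so the approaches coincide.
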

\begin{proof}
    Follows by iteratively applying Lemma~\ref{lem:exp}.
\end{proof}

\subsection{Prioritized intersection of polyhedra}
Often in applications, especially in optimization-based controllers, the feasibility sets are polyhedra \cite{houska2024polyhedral} (i.e., the function $g$ defining the sublevel set is affine).
An important property is that polyhedra are closed under $\pcap$. 

\begin{lemma}[Polyhedra closed under $\pcap$]
    \label{lem:pcap-poly}
    If $\mathcal{Z}_1$ and $\mathcal{Z}_2$ are polyhedra, then $\mathcal{Z}_1 \pcap \mathcal{Z}_2$ is also a polyhedron.
\end{lemma}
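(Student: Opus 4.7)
The plan is to piggy-back on the explicit representation already proved in Lemma~\ref{lem:exp}. Since $\mathcal{Z}_2$ is a polyhedron, the function $g$ defining it is affine, so we may write $g(z) = Az - b$ for some matrix $A \in \bbR^{m \times n_z}$ and vector $b \in \bbR^m$. Lemma~\ref{lem:exp} then yields
\begin{equation*}
    \mathcal{Z}_1 \pcap \mathcal{Z}_2 \;=\; \mathcal{Z}_1 \cap \{z : Az - b \leq \epsilon^*\} \;=\; \mathcal{Z}_1 \cap \{z : Az \leq b + \epsilon^*\}
\end{equation*}
for some $\epsilon^* \in \bbR^m_{\geq 0}$. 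The set on the right is cut out by finitely many affine inequalities and is therefore a polyhedron. Since the intersection of two polyhedra is again a polyhedron, the claim follows.

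The one point that deserves scrutiny is whether $\epsilon^*$ is actually well-defined, i.e.\ whether the optimization problem \eqref{eq:epipcap} that produces $\epsilon^*$ attains its infimum in the polyhedral setting. The feasible set of \eqref{eq:epipcap} is a polyhedron in the joint variable $(z,\epsilon)$, being $\mathcal{Z}_1 \times \bbR^m$ intersected with the halfspaces $Az - b \leq \epsilon$. The objective $\|\Lambda \epsilon\|_2^2$ is a convex quadratic bounded below by $0$. Whenever $\mathcal{Z}_1 \neq \emptyset$ the feasible set is nonempty (for any $z \in \mathcal{Z}_1$ pick $\epsilon = \max(0, Az-b)$), so the Frank--Wolfe theorem on attainment of quadratic programs over polyhedra guarantees a minimizer $(z^*, \epsilon^*)$ exists. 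The boundary case $\mathcal{Z}_1 = \emptyset$ is trivial since then $\mathcal{Z}_1 \pcap \mathcal{Z}_2 = \emptyset$, which is vacuously a polyhedron.

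The main obstacle, then, is really just ensuring the minimizer exists; once that is in hand, the proof reduces to observing that perturbing the right-hand side of an affine sublevel-set description keeps it polyhedral and that polyhedra are closed under ordinary intersection. No further calculation is required.
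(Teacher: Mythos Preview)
Your proof is correct and follows essentially the same route as the paper: write $\mathcal{Z}_2 = \{z : Az \leq b\}$, invoke Lemma~\ref{lem:exp} to obtain $\mathcal{Z}_1 \pcap \mathcal{Z}_2 = \mathcal{Z}_1 \cap \{z : Az \leq b + \epsilon^*\}$, and conclude by closure of polyhedra under intersection. The paper omits the Frank--Wolfe existence check you include, so your version is slightly more careful, but the argument is otherwise identical.
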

\begin{proof}
    If $\mathcal{Z}_2$ is a polyhedron, it can be expressed as $\{z : Az \leq b\}$ for some matrix $A$ and vector $b$. Using Lemma~\ref{lem:exp} with $g(z) = Az - b$ then gives  
    \begin{equation}
        \label{eq:cappoly}
    \mathcal{Z}_1 \pcap \mathcal{Z}_2 = \mathcal{Z}_1 \cap \{z : A z \leq b + \epsilon^*\} 
    \end{equation}
    for some $\epsilon^*$. Since the right-hand side of \eqref{eq:cappoly} is an intersection of two polyhedra, and that polyhedra are closed under intersection, $\mathcal{Z}_1 \pcap \mathcal{Z}_2$ is also a polyhedron. 
\end{proof}

\section{Computing prioritized intersections}
Lemma~\ref{lem:multi-pcap} gives a direct way of computing the prioritized intersection by solving a sequence of optimization problems of the form \eqref{eq:hi-opt}. Naively solving these sequences can, however, hinder the use of $\pcap$ in real-time applications. To this end, we propose an efficient way of computing $\pcap$ when the sets are polyhedra. 
That is, we compute $\pcap$ for the sets $\{\mathcal{Z}_i\}_{i=1}^p$ with $\mathcal{Z}_j=\{z : A_j z - b_j \leq 0\}$ for some matrices $A_j \in \mathbb{R}^{m_j \times n_z}$ and vectors $b_j \in \mathbb{R}^{m_j}$. For polyhedra, \eqref{eq:hi-opt} becomes a sequence of quadratic programs. 

Concretely, we use Lemma~\ref{lem:multi-pcap} to compute $\pcap$ by determining the perturbations $\epsilon^*_i$.
For numerical reasons (see Remark~\ref{rem:reg}), we regularize the objective by adding the term $q(z) = \rho^2 \|z\|_2^2$, with the regularization parameter $\rho >0$. The resulting sequence of quadratic programs are 
\begin{equation}
    \label{eq:hi-qp}
    \begin{aligned}
        (z^*_i, \epsilon^*_i) = \argmin_{z,\epsilon_i} &\|\Lambda_i \epsilon_i\|_2^2 + \rho^2 \|z\|_2^2  \\
        \text{subject to } 
                           &A_j z \leq b_j + \epsilon^*_j, \quad j = 1\dots,i-1.\\
                           &A_i z \leq b_i  + \epsilon_i.
    \end{aligned}
\end{equation}
from $i=2$ to $i=p$, with $\epsilon^*_1 \triangleq 0$.
Note that the regularization term $\rho^2 \|z\|_2^2$ leads to unique solutions in accordance with Remark~\ref{rem:reg}.

\subsection{Efficient computation of $\protect \pcap$ of polyhedra}
Efficiently solving the sequence of QPs in \eqref{eq:hi-qp} has been considered before in the context of hierarchical quadratic programming \cite{escande2014hierarchical} in robotics. Existing solvers for hierarchical quadratic programming include the primal active-set solver \texttt{lexls} proposed in \cite{dimitrov2015efficient}, and the interior-point solver \texttt{NIPM} proposed in \cite{pfeiffer2023n}. As mentioned in the introduction, these solvers can, however, be inefficient when there are inequality constraints, since they have mainly been developed for resolving prioritized \emph{equality} constraints. For optimization-based controllers, however, \emph{inequality} constraints needs to be handled efficiently. We, hence, propose an alternative method that is based on the dual active-set solver \texttt{DAQP} \cite{arnstrom2022daqp}. This solver efficiently solves quadratic programs that arise in embedded control applications, which we here extend to be applicable to hierarchies of the form \eqref{eq:hi-qp}. Two reasons why the active-set solver \texttt{DAQP} is more efficient than \texttt{lexls} (which is also an active-set solver) for inequality constraints is that it (i) exploits low-rank updates to intermediate matrix factorization when searching among inequality constraints, and (ii) operates on a dual problem, which typically results in fewer iterations \cite{goldfarbidnani}.

To be able to understand how \texttt{DAQP} can be tailored to efficiently compute prioritized intersections, we first briefly summarize how it solves problems of the form \eqref{eq:hi-qp} for a fixed level $i$; for a complete description, see \cite{arnstrom2022daqp}. Since \texttt{DAQP} is an \emph{active-set} solver, it tries to identify the so-called optimal active set, which is the inequality constraints that hold with equality at the optimum. If this set would be known, \eqref{eq:hi-qp} could easily be solved by solving a single system of linear equations. To find the optimal active set, \texttt{DAQP} updates a so-called \emph{working set} $\mathcal{W}\subseteq \mathbb{N}_{1:m}$, which contains indices of inequality constraints that are imposed to hold with equality. The working set $\mathcal{W}$ is updated by adding/removing constraints to/from it until it equals the optimal active set. To determine which constraint should be added/removed to/from $\mathcal{W}$, a system of linear equations (specifically, a \emph{KKT system}, see \eqref{eq:kkt} below for details) is solved. A high-level description of an iteration in \texttt{DAQP} is given in Algorithm~\ref{alg:as-methods-pseudo}, where $\lambda$ denotes dual variables for the constraints of \eqref{eq:hi-qp} (again, see \cite[Sec. II]{arnstrom2022daqp} for a more detailed description of \texttt{DAQP}.)
 
\begin{algorithm}[H]
    \caption{Prototypical active-set algorithm for \eqref{eq:hi-qp}.}
  \label{alg:as-methods-pseudo}
  \begin{algorithmic}[1]
      \Require Initial working set $\mathcal{W}$
      \Ensure Optimal primal/dual solution and active set 
	\Repeat
    \State $(\left[\begin{smallmatrix} z \\ \epsilon_i  \end{smallmatrix}\right], \lambda) \leftarrow$ Solve KKT system defined by $\mathcal{W}$ 
	\If{$(\left[\begin{smallmatrix} z \\ \epsilon_i  \end{smallmatrix}\right], \lambda)$ is primal and dual feasible} 
 \State \textbf{return} $(\left[\begin{smallmatrix} z^* \\ \epsilon^*_i  \end{smallmatrix}\right], \lambda^*,\mathcal{W}^*) \leftarrow (\left[\begin{smallmatrix} z \\ \epsilon^*_i  \end{smallmatrix}\right], \lambda,\mathcal{W})$
	\Else
    \State Modify $\mathcal{W}$ based on and $\left[\begin{smallmatrix} z \\ \epsilon_i  \end{smallmatrix}\right]$ and $\lambda$.
	\EndIf
    \Until
  \end{algorithmic}
\end{algorithm}

We will now highlight two features that make \texttt{DAQP} particularly suitable for solving hierarchies of the form \eqref{eq:hi-qp}.

\subsubsection{Implicitly handling slack variables $\epsilon$}
\label{sssec:imp-slack}
To efficiently solve the hierarchy of QPs in \eqref{eq:hi-qp}, each QP needs to be solved efficiently. To this end, \texttt{DAQP} can reduce the number of decision variables by handling the slack variables $\epsilon$ implicitly. Exactly how this can be done requires more details of the internals of \texttt{DAQP} and of \eqref{eq:hi-qp}. 
With dual variables $\lambda$, the KKT conditions of \eqref{eq:hi-qp} (after divding the objective with $\rho^2$) are
\begin{subequations}
    \label{eq:kkt}
  \begin{align}
        &\begin{bmatrix}
            z \\ \frac{1}{\rho^2}\Lambda^2_i \epsilon_i
        \end{bmatrix}
         + 
         \begin{bmatrix}
             A_1^T & \cdots & A_{i-1}^T & A_i^T \\
             0 & \cdots & 0 & -I 
         \end{bmatrix}
         \left[\begin{smallmatrix}
             \lambda_1 \\ \vdots \\ \lambda_i
     \end{smallmatrix}\right]
           = 
           0, \label{eq:kkt-stat}\\
        &0 \leq \lambda_j \perp b_j + \epsilon^*_j - A_j z \geq 0, \: j = 1,\dots, i-1, \label{eq:kkt-comp1} \\
        &0 \leq \lambda_i \perp b_i + \epsilon - A_i z \geq 0 \label{eq:kkt-comp2}.
  \end{align}
\end{subequations}
From the stationarity condition \eqref{eq:kkt-stat} we get that $\epsilon_i$ and $z$ are directly given by the dual variables $\lambda$; concretely
\begin{equation}
    \begin{aligned}
        &\epsilon_i = \rho^2\Lambda_i^{-2} \lambda_i, \\
        & z = -A^T  \lambda,
    \end{aligned}
\end{equation}
where $A^T \triangleq \begin{bmatrix}
    A^T_1 \cdots A^T_i 
\end{bmatrix}$ and $\lambda^T \triangleq 
\begin{bmatrix}
    \lambda_1^T \cdots \lambda_i^T 
\end{bmatrix}$. Note specifically that since $\Lambda_i$ is a diagonal matrix, $\epsilon_i$ is just a scaled version of $\lambda_i$. 

The structure of \eqref{eq:hi-qp} can be exploited further. First, let $\tilde{\epsilon}_i \triangleq \frac{1}{\rho}\Lambda_i \epsilon$. Then by defining 
\begin{equation}
    \label{eq:Md}
    M \triangleq 
\left[\begin{smallmatrix}
        A_1 & 0 \\
      \vdots & \vdots \\
      A_{i-1} & 0 \\
      A_{i} & -\rho\Lambda^{-1}
\end{smallmatrix}\right], \qquad 
  d\triangleq 
  \left[\begin{smallmatrix}
   b_1 + \epsilon_1 \\
   \vdots \\
   b_{i-1} + \epsilon_{i-1} \\
   b_i
\end{smallmatrix}\right],
\end{equation}
the stationarity condition \eqref{eq:kkt-stat} can be compactly written as
        $\left[\begin{smallmatrix}
                {z} \\ \tilde{\epsilon}_i
 \end{smallmatrix}\right]
         + M^T \lambda  = 0$,
and \eqref{eq:kkt-comp1}-\eqref{eq:kkt-comp2} can be compactly written as 
  $0 \leq \lambda \perp d- M 
  \left[\begin{smallmatrix}
          {z} \\ \tilde{\epsilon}_i
 \end{smallmatrix}\right] \geq 0$.

 For a given $\mathcal{W}$, we then have that $[M]_{\mathcal{W}} 
        \left[\begin{smallmatrix}
                z \\ \tilde{\epsilon}_i
 \end{smallmatrix}\right] = [d]_{\mathcal{W}}$ and $[\lambda]_i = 0$ for $i\notin \mathcal{W}$, where $[\cdot]_{\mathcal{W}}$ extracts rows of $M$ indexed by the working set $\mathcal{W}$ .
 This inserted into the stationarity condition gives that the dual variables are given by 
\begin{equation}
    \label{eq:daqp-linsolve}
    [M]_{\mathcal{W}} [M]_{\mathcal{W}}^T [\lambda]_{\mathcal{W}} =-[d]_{\mathcal{W}}.
\end{equation}
To solve these systems of linear equations efficiently, \texttt{DAQP} maintains an $LDL^T$ factorization with a lower triangular matrix $L$ and a diagonal matrix $D$ such that $[M]_{\mathcal{W}} [M]_{\mathcal{W}}^T = L D L^T$. The factors $L$ and $D$ are updated when the working set $\mathcal{W}$ changes.  

\begin{remark}[Lack of low-rank updates in \texttt{lexls}]
The solver \texttt{lexls} uses a lexicographic QR decomposition to efficiently handle equality constraints \cite{dimitrov2015efficient}. While well-known low-rank updates exists for the conventional QR decomposition, no such updates have yet been proposed for the \emph{lexicographical} QR decomposition (and it is unclear if such updates are possible.) 
\end{remark}

Next we will show how one can use the lower dimensional $A$ instead of the full matrix $M$ to update the $LDL^T$ factorization, which is computationally benificial. First, we recall the following lemma (Theorem 2 in \cite{bemporad2016qp}) that is used in \texttt{DAQP} to recursively update an $LDL^T$ factorization.

\begin{lemma}
    \label{lem:LDLup-original}
    Let $L$ be a unit lower triangular matrix and $D$ be a diagonal matrix such that $[M]_{\mathcal{W}} [M]_{\mathcal{W}}^T = L D L^T$. Furthermore let $\tilde{M} = [\begin{smallmatrix} 
        [M]_{\mathcal{W}} \\ [M]_i 
    \end{smallmatrix}]$. Then $\tilde{M} \tilde{M}^T = \tilde{L} \tilde{D} \tilde{L}^T$ with 
      $\tilde{L} = 
  \begin{bmatrix}
	L &0\\
	l^T & 1
  \end{bmatrix}$ and 
  $\tilde{D} =
  \begin{bmatrix}
	D &0 \\
	0 & \delta
  \end{bmatrix},$
  where $l$ and $\delta$ are defined by
      $L D l = [M]_{\mathcal{W}} [M]_i^T$, $\delta = [M]_i [M]_i^T - l^T D l.$
\end{lemma}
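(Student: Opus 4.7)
The plan is to prove this by direct verification: compute $\tilde{M}\tilde{M}^T$ in $2\times 2$ block form, compute $\tilde{L}\tilde{D}\tilde{L}^T$ in the corresponding block form using the proposed $\tilde{L}$ and $\tilde{D}$, and match blocks to recover the definitions of $l$ and $\delta$.

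First I would expand
\begin{equation*}
\tilde{M}\tilde{M}^T =
\begin{bmatrix}
[M]_{\mathcal{W}}[M]_{\mathcal{W}}^T & [M]_{\mathcal{W}}[M]_i^T \\
[M]_i[M]_{\mathcal{W}}^T & [M]_i[M]_i^T
\end{bmatrix}.
\end{equation*}
Next I would multiply out the candidate factorization step by step,
\begin{equation*}
\tilde{L}\tilde{D}\tilde{L}^T
=
\begin{bmatrix} L & 0 \\ l^T & 1 \end{bmatrix}
\begin{bmatrix} D & 0 \\ 0 & \delta \end{bmatrix}
\begin{bmatrix} L^T & l \\ 0 & 1 \end{bmatrix}
=
\begin{bmatrix} LDL^T & LDl \\ l^T D L^T & l^T D l + \delta \end{bmatrix}.
\end{equation*}

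Then I would equate the four blocks with those of $\tilde{M}\tilde{M}^T$. The top-left block reduces to $LDL^T = [M]_{\mathcal{W}}[M]_{\mathcal{W}}^T$, which is exactly the hypothesis. The top-right block yields $LDl = [M]_{\mathcal{W}}[M]_i^T$, which is taken as the defining equation for $l$; the bottom-left block is simply its transpose and so carries no extra content. Finally, the bottom-right block gives $l^T D l + \delta = [M]_i [M]_i^T$, which rearranges to $\delta = [M]_i[M]_i^T - l^T D l$, as claimed. Since $L$ is unit lower triangular, so is $\tilde L$ by inspection, and $\tilde D$ is diagonal by construction.

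There is essentially no hard step: the whole argument is a routine block-matrix multiplication. The only point that deserves a brief comment is the well-posedness of $l$. Because $L$ is unit lower triangular it is invertible, so $l$ is uniquely determined by $LDl = [M]_{\mathcal{W}}[M]_i^T$ whenever $D$ is invertible, which is the generic case of interest here (when $[M]_{\mathcal{W}}$ has full row rank, so that $LDL^T$ is positive definite and $D$ has strictly positive diagonal). Under this regularity assumption the computation above is exhaustive and establishes the claimed factorization.
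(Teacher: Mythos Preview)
Your proof is correct and is the standard direct verification by block multiplication. The paper does not actually prove this lemma itself; it merely recalls it as Theorem~2 in \cite{bemporad2016qp}, so there is no paper proof to compare against, but your argument is exactly the natural one that underlies that result.
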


The following novel result shows that only $A$ needs to be used when updating an $LDL^T$ factorization. As a result, matrix operations are carried out on matrices with $n_z$ columns instead of on matrices with $n_z + m_i$ columns.
\begin{lemma}
    \label{lem:LDLup1}
    Assume that $i \notin \mathcal{W}$ and that $M$ is defined as in \eqref{eq:Md}. Then $[M]_{\mathcal{W}} [M]_i^T = [A]_{\mathcal{W}} [A]_i^T$.
\end{lemma}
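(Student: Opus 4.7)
My plan is to unpack the row structure of $M$ in~\eqref{eq:Md} and read off the inner products $[M]_k [M]_j^T$ directly. The key observation is that the right block-column of $M$, which has width $m_i$, is populated only for the rows corresponding to level $i$; on those rows it equals the diagonal matrix $-\rho \Lambda^{-1}$. All rows from levels $1, \ldots, i-1$ have a zero right block, so their contribution to any inner product with another row is simply $[A]_k [A]_j^T$.

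Concretely, I would write an arbitrary row as $[M]_k = \bigl([A]_k,\, \alpha_k^T\bigr)$, where $\alpha_k = 0$ if $k$ indexes a constraint from an earlier level, and $\alpha_k = -\rho \Lambda^{-1} e_{k'}$ with $k'$ the local index inside the level-$i$ block. Then
\[
    [M]_k [M]_j^T \;=\; [A]_k [A]_j^T \;+\; \rho^2\, e_{k'}^T \Lambda^{-2} e_{j'},
\]
where the last term is understood to be $0$ whenever either $k$ or $j$ comes from an earlier level. Because $\Lambda^{-2}$ is diagonal, the second-block contribution vanishes whenever $k' \neq j'$, and in particular whenever $k \neq j$.

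The concluding step invokes the hypothesis $i \notin \mathcal{W}$: every $j \in \mathcal{W}$ satisfies $j \neq i$, so the slack-block term above is identically zero for each inner product $[M]_j [M]_i^T$ with $j \in \mathcal{W}$. Stacking these row-by-row yields $[M]_{\mathcal{W}} [M]_i^T = [A]_{\mathcal{W}} [A]_i^T$. The only subtle point — and really the whole content of the lemma — is to notice that the slack block $-\rho \Lambda^{-1}$ is diagonal, so only the diagonal pair $k = j$ can produce a nonzero contribution from that block; the working-set assumption cleanly excludes this case. The computational payoff, which I would comment on briefly after the proof, is that the updates in Lemma~\ref{lem:LDLup-original} only need to be carried out with the $n_z$-column matrix $A$ rather than the $(n_z + m_i)$-column matrix $M$.
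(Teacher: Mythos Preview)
Your proof is correct and follows essentially the same route as the paper: both decompose $M = [A \;\; \mathcal{I}]$ with $\mathcal{I} = \left[\begin{smallmatrix}0 \\ -\rho\Lambda^{-1}\end{smallmatrix}\right]$, expand the inner product as $[A]_{\mathcal{W}}[A]_i^T + [\mathcal{I}]_{\mathcal{W}}[\mathcal{I}]_i^T$, and then use the diagonal structure of $\Lambda^{-1}$ together with $i\notin\mathcal{W}$ to kill the second term. Your version is simply more explicit about the row-by-row mechanism (the $e_{k'}^T\Lambda^{-2}e_{j'}$ term vanishing for $k'\neq j'$), whereas the paper compresses this into the single phrase ``the structure of $\mathcal{I}$ in combination with $i\notin\mathcal{W}$.''
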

\begin{proof}
    Let the right block of $M$ in \eqref{eq:Md} be denoted $\mathcal{I}$; that is $\mathcal{I} \triangleq \left[\begin{smallmatrix}0 \\ -\rho \Lambda^{-1} \end{smallmatrix}\right]$, resulting in $M = [A \:\: \mathcal{I}]$.
We then get
$[M]_{\mathcal{W}} [M]_i^T  = [A]_{\mathcal{W}} [A]_i^T + [\mathcal{I}]_{\mathcal{W}} [\mathcal{I}]_i^T.$
The structure of $\mathcal{I}$ in combination with $i \notin \mathcal{W}$ gives $[\mathcal{I}]_{\mathcal{W}} [\mathcal{I}]_i^T = 0$.
\end{proof}
\begin{lemma}
    \label{lem:LDLup2}
    Let $M$ be defined as in \eqref{eq:Md} and let $k\triangleq i - \sum_{j=1}^{h-1} m_j$. Then 
    \begin{equation*}
        [M]_{i} [M]_i^T = \begin{cases}
            [A]_{i} [A]_i^T + \rho^2 [\Lambda^{-2}]_{kk}, &\text{if } k \geq 0 \\
            [A]_{i} [A]_i^T, &\text{otherwise.}
        \end{cases}
    \end{equation*}
\end{lemma}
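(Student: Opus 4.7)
The plan is to mirror the proof of Lemma~\ref{lem:LDLup1} but without the benefit of an ``index not in $\mathcal{W}$'' condition: now the off-diagonal piece $[\mathcal{I}]_i[\mathcal{I}]_i^T$ does not automatically vanish, and the case split in the statement is precisely the book-keeping of when it does.

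First I would write $M$ in the block form inherited from \eqref{eq:Md},
\begin{equation*}
    M = [A \quad \mathcal{I}], \qquad \mathcal{I} \triangleq \begin{bmatrix} 0 \\ -\rho \Lambda^{-1} \end{bmatrix},
\end{equation*}
where the zero block has $\sum_{j=1}^{h-1} m_j$ rows (corresponding to levels $1,\dots,h-1$) and the $-\rho\Lambda^{-1}$ block has $m_h$ rows (corresponding to the current level $h$). An immediate row-by-row computation then gives
\begin{equation*}
    [M]_i [M]_i^T = [A]_i [A]_i^T + [\mathcal{I}]_i [\mathcal{I}]_i^T,
\end{equation*}
so only the second term needs to be pinned down.

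Next I would observe that $[\mathcal{I}]_i$ is nonzero precisely when row $i$ falls in the lower block of $\mathcal{I}$, i.e. when $k \triangleq i - \sum_{j=1}^{h-1} m_j \geq 1$. In that case $[\mathcal{I}]_i$ equals the $k$-th row of $-\rho \Lambda^{-1}$; since $\Lambda$ (and hence $\Lambda^{-1}$) is diagonal, this row has a single nonzero entry $-\rho[\Lambda^{-1}]_{kk}$ in position $k$, whence
\begin{equation*}
    [\mathcal{I}]_i [\mathcal{I}]_i^T = \rho^2 [\Lambda^{-2}]_{kk}.
\end{equation*}
If instead $k < 1$, then row $i$ lies in the zero block of $\mathcal{I}$, so $[\mathcal{I}]_i = 0$ and the cross term vanishes. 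Combining the two cases with the decomposition above delivers the claimed formula.

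I don't foresee a real obstacle: the argument is essentially an index-chasing exercise on the block structure of $M$. The only thing that requires a little care is matching the local index $k$ inside the last block to the global row index $i$, and being explicit that diagonality of $\Lambda^{-1}$ is what collapses $[\mathcal{I}]_i[\mathcal{I}]_i^T$ to the single scalar $\rho^2[\Lambda^{-2}]_{kk}$ rather than a more general quadratic form.
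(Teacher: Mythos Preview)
Your proposal is correct and follows essentially the same route as the paper: write $M=[A\;\mathcal{I}]$ with $\mathcal{I}=\left[\begin{smallmatrix}0\\-\rho\Lambda^{-1}\end{smallmatrix}\right]$, expand $[M]_i[M]_i^T=[A]_i[A]_i^T+[\mathcal{I}]_i[\mathcal{I}]_i^T$, and read off the second term from the block structure. Your write-up is in fact slightly more explicit than the paper's, spelling out that diagonality of $\Lambda^{-1}$ is what reduces $[\mathcal{I}]_i[\mathcal{I}]_i^T$ to the single scalar $\rho^2[\Lambda^{-2}]_{kk}$.
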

\begin{proof}
    Analogous to the proof of Lemma~\ref{lem:LDLup1}, we get that 
    $[M]_i [M]_i^T = [A]_{i} [A]_i^T + [\mathcal{I}]_{i} [\mathcal{I}]_i^T,$
    where $\mathcal{I} \triangleq \left[\begin{smallmatrix}0 \\ -\rho \Lambda^{-1} \end{smallmatrix}\right]$, with the zero block having $\sum_{j=1}^{h-1} m_j$ rows. Based on this, we get that 
$[\mathcal{I}]_{i} [\mathcal{I}]_i^T = 
\begin{cases}
    \rho^2 [\Lambda^{-2}]_{kk} , &\text{if } i \geq \sum_{j=1}^{h-1} m_j \\
            0. &\text{otherwise.}
        \end{cases}$
\end{proof}
An equivalent way of expressing the condition that $i \geq \sum_{j=1}^{h-1} m_j$ is that the $i$th row of $A$ belongs to $A_i$.

Now, Lemma~\ref{lem:LDLup-original} can be modified to only perform computations on $A$ instead of $M$, which reduces the number of numerical operations. 

\begin{theorem}
    Let $M$ be the matrix defined in \eqref{eq:Md}, $\mathcal{W}$ be an index set, and $i$ be an integer such that $i \notin \mathcal{W}$. Then $l$ and $\delta$ in Lemma~\ref{lem:LDLup-original} simplifies to 
  \begin{equation*}
      \begin{aligned}
          &L D l = [A]_{\mathcal{W}} [A]_i^T, \\ 
          &\delta = 
      \begin{cases}
          \rho^2[\Lambda^{-2}]_{kk}+[A]_i [A]_i^T - l^T D l &\text{if } i \geq \sum_{j=1}^{h-1} m_j \\
          [A]_i [A]_i^T - l^T D l &\text{otherwise.}
      \end{cases}
      \end{aligned}
  \end{equation*}
\end{theorem}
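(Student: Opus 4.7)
The plan is to observe that this theorem is essentially a direct consequence of the three preceding lemmas, obtained by specializing Lemma~\ref{lem:LDLup-original} to the structured matrix $M$ defined in \eqref{eq:Md} and then invoking Lemmas~\ref{lem:LDLup1} and~\ref{lem:LDLup2} to rewrite the two occurrences of $M$ using only the smaller matrix $A$. No new linear-algebraic identities are needed; the work is just substitution.

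First I would restate the generic identities from Lemma~\ref{lem:LDLup-original}, namely $L D l = [M]_{\mathcal{W}}[M]_i^T$ and $\delta = [M]_i [M]_i^T - l^T D l$. Since the hypothesis gives $i \notin \mathcal{W}$, Lemma~\ref{lem:LDLup1} applies verbatim and yields $[M]_{\mathcal{W}}[M]_i^T = [A]_{\mathcal{W}}[A]_i^T$. Inserting this into the first identity establishes the equation $L D l = [A]_{\mathcal{W}}[A]_i^T$ claimed in the theorem.

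For the formula for $\delta$, I would substitute Lemma~\ref{lem:LDLup2} into $[M]_i[M]_i^T$. That lemma decomposes $M = [A \;\; \mathcal{I}]$, where $\mathcal{I}$ has a leading zero block followed by $-\rho\Lambda^{-1}$, and splits into two cases depending on whether the row index $i$ lies in the block corresponding to $A_i$, i.e., whether $i \geq \sum_{j=1}^{h-1} m_j$. In that case the contribution of $\mathcal{I}$ is the diagonal entry $\rho^2 [\Lambda^{-2}]_{kk}$ with $k \triangleq i - \sum_{j=1}^{h-1} m_j$, and otherwise it vanishes. Subtracting $l^T D l$ from each branch of $[M]_i[M]_i^T$ directly gives the piecewise expression for $\delta$ stated in the theorem.

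Since the entire argument is a transparent substitution of already-proved identities, there is no real obstacle. The only care needed is bookkeeping: checking that the index conventions in Lemmas~\ref{lem:LDLup1} and~\ref{lem:LDLup2} — the same row set $\mathcal{W}$, the same row index $i$ with $i \notin \mathcal{W}$, and the decomposition $M = [A \;\; \mathcal{I}]$ — are consistent with the way $l$ and $\delta$ are defined in Lemma~\ref{lem:LDLup-original}, so that the replacements are legitimate. Once this is verified, the two equations of the theorem follow immediately.
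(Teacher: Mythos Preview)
Your proposal is correct and matches the paper's own proof, which simply states that the result follows directly by combining Lemmas~\ref{lem:LDLup-original}--\ref{lem:LDLup2}. You have in fact spelled out the substitution in more detail than the paper does.
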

\begin{proof}
    Follows directly by combining the results in Lemma~\ref{lem:LDLup-original}-\ref{lem:LDLup2}.
\end{proof}

\subsubsection{Warm-starting}
The number of KKT systems that needs to be solved can be reduced significantly if the initial working set $\mathcal{W}_0$ is close to the optimal active set, since this typically requires fewer constraint to be added/removed to/from $\mathcal{W}$. We, hence, warm-start \texttt{DAQP} to efficiently solve the sequence of problems in \eqref{eq:hi-qp}, summarized in Algorithm~\ref{alg:main}. 

\begin{algorithm}[H]
    \caption{Warm starting \texttt{DAQP} to solve \eqref{eq:hi-qp}.}
    \label{alg:main}
  \begin{algorithmic}[1]
      \Require Polyhedra $\{(A_i,b_i)\}_{i=1}^p$, weights $\{\Lambda_i\}^p_{i=2}$, \newline regularization $\rho >0$, initial working set $\mathcal{W}_0$. 
    \Ensure Perturbations $\{\epsilon^*_i\}_{i=2}^p$
    \For{$i \in \{2,\dots,p\}$}
    \State $(\epsilon^*_i, \mathcal{W}_i) \leftarrow$ solve \eqref{eq:hi-qp} using \texttt{DAQP} with $\mathcal{W}_{i-1}$. 
    \EndFor
  \end{algorithmic}
\end{algorithm}


\section{Numerical Experiments}
\label{sec:experiment}
In this section, we investigate how Algorithm~\ref{alg:main} compares to existing state-of-the-art solvers for solving hierarchical optimization problems of the form \eqref{eq:hi-qp}. We then show how prioritized intersections can be used in a real-time control application by using them in an autonomous driving scenario where constraints of different importance need to be imposed.

With the experiments\footnote{Code for all experiments is available at \url{https://github.com/darnstrom/hdaqp-experiments}} we aim to show that:
(i) Algorithm~\ref{alg:main} outperforms state-of-the-art solvers for computing prioritized intersections of polyhedra. 
(ii) Prioritized intersections are useful in control applications with prioritized constraints.
(iii) Prioritized intersections can be used in real-time applications. 

\subsection{Computation of $\protect \pcap$ for polyhedra}

In the experiments, we compute the prioritized intersection for $p=10$ sets of the form $\{z : \underline{b}_i \leq A_i z \leq \overline{b}_i\}$ with the elements of $A_i \in \mathbb{R}^{m_i \times n_z}$ drawn from a uniform distribution over $[0,1]$. The dimension of $z$ is $n_z = 50$, and the number of constraints $m_i$ in the $i$th set is drawn from a uniform distribution over $\{1,\dots,20\}$. The elements of the upper offset $\overline{b}_i$ are drawn from a uniform distribution over $[0,1]$, and are then perturb with a term $A_i \tilde{z}_i$, where elements of $\tilde{z}_i \in \bbR^{n_z}$ are drawn from a uniform distribution over $[-1,1]$. The perturbation results in $\tilde{z}_i$ being feasible for the $i$th set, ensuring a nonempty set. The lower bound $\underline{b}$ is set equal to $\overline{b}$ with a probability of $q$, and otherwise it is set to $\overline{b}_i-v$, where the elements of $v$ are drawn from a uniform distribution over $[0,1]$ (which ensures that $\underline{b}_i \leq \overline{b}_i$.) We solve the resulting hierarchy of optimization problems of the form \eqref{eq:hi-qp} using Algorithm~\ref{alg:main} (\texttt{DAQP}) and the state-of-the art hierarchical solvers proposed in \cite{dimitrov2015efficient} (\texttt{lexls}) and in \cite{pfeiffer2023n} (\texttt{NIPM}). 

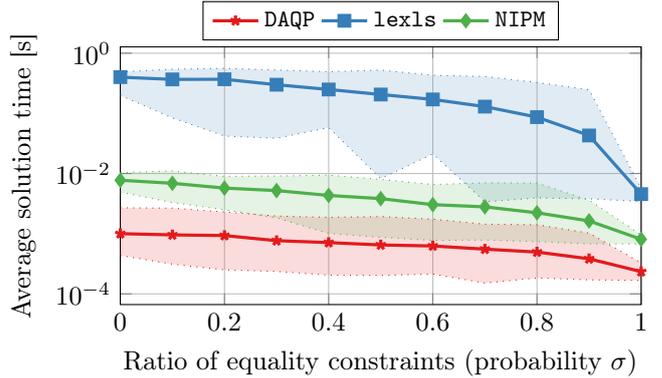
\begin{figure}
  \centering
  \begin{tikzpicture}[scale=1]
    \usepgfplotslibrary{fillbetween} 
    \pgfplotstableread{data/ratio.dat}{\ratio}
    \begin{axis}[
        xmin = 0, xmax=1,
        scale=1,
        ymode=log,
        xlabel={Ratio of equality constraints (probability $\sigma$)},
        ylabel={Average solution time [s]},
        legend style={at ={(0.5,1.025)},anchor=south}, ymajorgrids,yminorgrids,xmajorgrids,
        x post scale=1,
        y post scale=0.6,
        legend cell align={left},legend columns=3,
        legend style={nodes={scale=0.9, transform shape}},
        ]
        \addplot [set19c1,very thick, mark=star] table [x={ratio}, y={daqpmean}] {\ratio}; 
        \addplot [set19c2,very thick, mark=square*] table [x={ratio}, y={lexlsmean}] {\ratio}; 
        \addplot [set19c3,very thick, mark=diamond*] table [x={ratio}, y={nipmmean}] {\ratio}; 

        \addplot [set19c1,dotted, name path=A] table [x={ratio}, y={daqpmax}] {\ratio}; 
        \addplot [set19c1,dotted, name path=B] table [x={ratio}, y={daqpmin}] {\ratio}; 
        \addplot [set19c1,fill opacity=0.15] fill between [of=A and B];

        \addplot [set19c2,dotted, name path=Al] table [x={ratio}, y={lexlsmax}] {\ratio}; 
        \addplot [set19c2,dotted, name path=Bl] table [x={ratio}, y={lexlsmin}] {\ratio}; 
        \addplot [set19c2,fill opacity=0.15] fill between [of=Al and Bl];

        \addplot [set19c3,dotted, name path=An] table [x={ratio}, y={nipmmax}] {\ratio}; 
        \addplot [set19c3,dotted, name path=Bn] table [x={ratio}, y={nipmmin}] {\ratio}; 
        \addplot [set19c3,fill opacity=0.15] fill between [of=An and Bn];
        \legend{\texttt{DAQP} , \texttt{lexls}, \texttt{NIPM}};
    \end{axis}
\end{tikzpicture}
  \caption{Average solution time for forming the prioritized intersection for randomly generated problems with a different ratio $\sigma$ of equality constraints. The dashed lines show the best/worst-case solution times.}
  \label{fig:ratio}
\end{figure}
We generate the sets for different values of the equality constraint probability $\sigma$, which varies the ratio of equality constraints and inequality constraints. For example, $\sigma=1$ means that all of the constraints are equality constraints, and $\sigma=0$ means that all of the constraints are inequality constraint. For each $\sigma$ we randomly generate 1000 prioritized intersections and solve them with Algorithm~\ref{alg:main}, \texttt{lexls} and \texttt{NIPM}. Figure~\ref{fig:ratio} shows the average and best/worst-case solution times for solving the sequence in \eqref{eq:hi-qp} from $i=1$ to $i=10$ (i.e., for computing the prioritized intersection.)

The results are shown in Figure \ref{fig:ratio}, which shows that \texttt{DAQP} outperforms both \texttt{lexls} and \texttt{NIPM}. Moreover, the speedup is greater for lower equality probability $\sigma$, supporting our claim that \texttt{DAQP} are better at handling inequality constraints compared with existing solver for handling prioritized constraints.

\begin{figure*}
    \centering
    \begin{tikzpicture}
        \pgfplotsset{filllegend/.style={
                legend image code/.code={
                    \fill[opacity=0.5] [#1] (0cm,-0.15cm) rectangle (0.6cm,0.2cm);
                },
        }}
        \begin{axis}[%
            hide axis,
            xmin=0, xmax=1,
            ymin=0, ymax=1,
            legend style={legend cell align=left, legend columns=4},
            legend style={/tikz/every even column/.append style={column sep=0.2cm}}
            ]
            \addlegendimage{black,very thick}
            \addlegendimage{filllegend=prio3}
            \addlegendimage{filllegend=prio4}
            \addlegendimage{filllegend=prio5}
            \legend{Trajectory, High Priority, Medium Priority, Low Priority}
        \end{axis}
    \end{tikzpicture}

    \vspace{10pt}
    \subfloat[Ordering 1]{%
        \makeplot{3}{5}{4}
    }
    \subfloat[Ordering 2]{%
        \makeplot{4}{3}{5}
    }
    \subfloat[Ordering 3]{%
        \makeplot{5}{4}{3}
    }
    \caption{Scenarios with differently prioritized time-dependent constraints on the lateral position.}
    \label{fig:scenarios}
\end{figure*}

\subsection{Autonomous driving}
\label{sec:mpc-exp}

To exemplify prioritized intersections in practice, we consider the control of the lateral position of a car moving at a constant speed $v$, which was considered in \cite{skibik2021feasibility}. The car is modeled with a bicycle model, where the states $x = [s\: \psi\: \beta\: \omega]$ consist of the lateral position $s$, the yaw angle $\psi$, the sideslip angle $\beta$, and the yaw rate $\omega$. The control is the steering angle $u = \delta_f$.

The continuous time dynamics is $\dot{x} = A x + B u$,
with the matrices
\begin{equation*}
  A \triangleq \begin{bmatrix}
      0 & v & v & 0 \\
      0 & 0 & 0 & 1 \\
      0 & 0 & -\frac{2 C_{\alpha}}{m v} & \frac{C_{\alpha} (\ell_r - \ell_f)}{m v^2} - 1 \\
          0 & 0 & \frac{C_{\alpha} (\ell_r - \ell_f)}{I_{zz}} & - \frac{C_{\alpha}(\ell_r^2 + \ell_f^2)}{I_{zz} v}
  \end{bmatrix},\quad 
  B \triangleq 
  \begin{bmatrix}
      0 \\ 0 \\ \frac{C_{\alpha}}{m v} \\ \frac{C_{\alpha} \ell_f}{I_{zz}}
  \end{bmatrix}.
\end{equation*}
The constants $C_{\alpha}$, $I_{zz}$, $\ell_r$, $\ell_f$, and $m$ characterize the car, and the particular  values for these in the experiments can be found in \cite{skibik2021feasibility}, or in the above-mentioned code. 

In additions, we have the state constraints
        $\left|\delta_f\right| \leq  \tfrac{\pi}{6}$,
        $\left|\delta_f -\beta - \tfrac{\ell_f}{v} \omega \right| \leq \tfrac{\pi}{22.5}$, and $\left|\tfrac{\ell_r}{v} \omega -\beta \right| \leq \tfrac{\pi}{22.5}$,
where the first constraint limits the steering angle, and the other two constraints limit the front and rear slip angles. The limits on the steering angle are due to mechanical limitations, while the limits on the slip angles are to avoid losing control of the car.
The road that the car is driving on has width $W$, which gives the constraint $|s| \leq \tfrac{W}{2}$.
Finally, we assume that there are three different obstacles in the lateral position of different importance, leading to the avoidance constraints 
    $s(t) \in [\underline{s}^i, \overline{s}^i] \text{ for } t \in [\underline{t}^i, \overline{t}^i]$
 for $i = 1,2,3$.

\begin{remark}[Obstacles] In practice it is common to have a extra layer that detects obstacles in the lateral position. Here we simply define constraints directly in lateral space since the focus is on highlighting how constraints can be prioritized, rather than the obstacle avoidance itself.
\end{remark}

These constraints are imposed according to the priorities
\begin{equation*}
    \begin{aligned}
        \left|\delta_f\right| &\leq  \tfrac{\pi}{6} & \quad(P1)\\
     \left|\delta_f -\beta - \tfrac{\ell_f}{v}\omega\right| &\leq \tfrac{\pi}{22.5},\qquad    \left|\tfrac{\ell_r}{v}\omega - \beta\right| \leq \tfrac{\pi}{22.5}  & \qquad (P2) \\
        |s| &\leq \tfrac{W}{2} & \qquad (P3) \\
        s(t) \in [\underline{s}^i, \overline{s}^i] &\text{ for } t \in [\underline{t}^i, \overline{t}^i] \text{ for } i =1,2,3. &(P4-P6)
    \end{aligned}
\end{equation*}
The reasoning for the ordering is as follows: the constraint on $\delta_f$ is mechanical and is impossible to override, hence it has the highest priority $(P1)$. The constraints on front and rear slip angles are necessary to retain the control of the car, it could be violated, but doing so would lead to unsafe behavior; hence, these constraints have the second highest priority $(P2)$. The constraint to stay on the road could also be violated if necessary, but should be avoided; hence the priority is moderate $(P3)$. Finally, the obstacles have different priorities, ranging from $(P4)$ to $(P6)$. These constraints should be avoided if possible, but can be violated if necessary.

The constraint hierarchy $(P1-P6)$ was incorporated in a linear MPC controller, which after state condensation is of the form~\eqref{eq:opt-ctrl} with $\mathcal{U}(x) = \pcap_{i=1}^6 \mathcal{U}_i(x)$, where $\mathcal{U}_i$ are polyhedra (see the experiment code for details) from the constraints in $(P1)-(P6)$. A horizon of $N=30$ time steps with a sample time of $10$ milliseconds was used in the MPC. More implementation details for the MPC example are given in the above-mentioned experiment code. Figure~\ref{fig:scenarios} shows the resulting lateral position for three different scenarios when the obstacles have different prioritizations. Since the constraints from the obstacles are conflicting, constraints corresponding to obstacles with lower priority are violated when necessary. This means that different prioritizations results in different trajectories, as illustrated in Figure~\ref{fig:scenarios}. The resulting solve times for computing the prioritized intersections in each scenario are shown in Figure~\ref{fig:auto-times}, which highlights that Algorithm~\ref{alg:main} is able to resolve conflicting constraint within the controller's sample time ($10$ milliseconds).
Note that the solution time is for both forming the prioritized intersection \emph{and} minimizing a conventional quadratic cost for steering the lateral position to $0$ subject to the resulting prioritized intersection.
Note that when more conflicts need to be resolved the solve times increase. Using existing solvers like \texttt{lexls} or \texttt{NIPM} would not fulfill the real-time requirements for this application, highlighting that the contribution of this paper allows for a wider application of prioritized constraint in real-time control applications. 

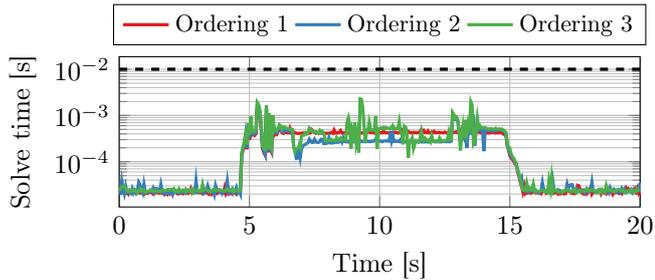
\begin{figure}
    \centering
    \begin{tikzpicture}[scale=1]
        \pgfplotstableread{data/scenario354.dat}{\dataone}
        \pgfplotstableread{data/scenario435.dat}{\datatwo}
        \pgfplotstableread{data/scenario543.dat}{\datathree}
        \begin{axis}[
            name = time,
            xmin=0,xmax=20,
            ymode=log,
            xlabel={Time [s]},
            ylabel={Solve time [s]},
            legend style={at ={(0.5,1.35)},anchor=north}, 
            legend cell align={left},legend columns=3,
            legend style={nodes={scale=0.9, transform shape}},
            ymajorgrids,yminorgrids,xmajorgrids,
            x post scale=1.0,
            y post scale=0.35,
            ]
            \addplot [set19c1,very thick] table [x={t}, y={tdaqp}] {\dataone}; 
            \addplot [set19c2,very thick] table [x={t}, y={tdaqp}] {\datatwo}; 
            \addplot [set19c3,very thick] table [x={t}, y={tdaqp}] {\datathree}; 
            \addplot[mark=none, black,dashed, very thick, samples=2,domain=0:20] {0.01};
            \legend{Ordering 1, Ordering 2, Ordering 3};
        \end{axis}
    \end{tikzpicture}
    \caption{Execution time to solve hierarchical problems of the form \eqref{eq:hi-qp} from $i=1,\dots, 8$ using Algorithm~\ref{alg:main} for different orderings of the constraints.}
    \label{fig:auto-times}
\end{figure}

\section{Conclusion}
We have introduced a systematic way of handling prioritized constraints in optimization-based controllers. By introducing \emph{prioritized intersections}, we have given a formal way to resolve conflicting constraints, which unifies previous work.
We have also enabled the use of prioritized constraints in real-time application by proposing an efficient solver that computes prioritized intersections of polyhedra. This was validated in an MPC application for autonomous driving, where six different levels of conflicting constraint could be resolved in real-time. Finally, we showed that the proposed computational method outperforms state-of-the art hierarchical quadratic programming methods. 
The proposed framework for handling prioritized constraints is accessible in version 0.7.0 of \texttt{DAQP}\footnote{\url{https://github.com/darnstrom/daqp}}, and in version 0.6.0 of the MPC software package \texttt{LinearMPC.jl}\footnote{\url{https://github.com/darnstrom/LinearMPC.jl}}. 

\section*{Acknowledgements}
\noindent This work was partially supported by the Swedish Foundation for Strategic Research.

\bibliographystyle{IEEEtran}
\bibliography{lib.bib}

\end{document}